
\documentclass[11pt,reqno,tbtags]{amsart}
\usepackage{amssymb}
\usepackage{natbib}
\bibpunct[, ]{[}{]}{;}{n}{,}{,}




\allowdisplaybreaks



\newtheorem{theorem}{Theorem}
\newtheorem{lemma}[theorem]{Lemma}

\theoremstyle{definition}
\newtheorem{example}[theorem]{Example}

\newtheorem{remark}{Remark}
\newtheorem*{warning}{Warning}

\theoremstyle{remark}

\newenvironment{romenumerate}{\begin{enumerate}
 }{\end{enumerate}}

\newcounter{oldenumi}
\newenvironment{defq}
{
\begin{enumerate} \setcounter{enumi}{\value{oldenumi}}%
}
{\setcounter{oldenumi}{\value{enumi}}\end{enumerate}}


\newcounter{thmenumerate}

\newcounter{xenumerate}   

\newcommand{\refL}[1]{Lemma~\ref{#1}}
\newcommand{\refR}[1]{Remark~\ref{#1}}
\newcommand{\refS}[1]{Section~\ref{#1}}

\newcommand{\refD}[1]{Definition~\ref{#1}}
\newcommand{\refE}[1]{Example~\ref{#1}}


\newcommand\marginal[1]{\marginpar{\raggedright\parindent=0pt\tiny #1}}

\begingroup
  \count255=\time
  \divide\count255 by 60
  \count1=\count255
  \multiply\count255 by -60
  \advance\count255 by \time
  \ifnum \count255 < 10 \xdef\klockan{\the\count1.0\the\count255}
  \else\xdef\klockan{\the\count1.\the\count255}\fi
\endgroup

\newcommand\nopf{\qed}   


\newcommand\set[1]{\ensuremath{\{#1\}}}

\newcommand\bigpar[1]{\bigl(#1\bigr)}

\def\rompar(#1){\textup(#1\textup)}    

\def\xexp(#1){e^{#1}}

\newcommand\ntoo{\ensuremath{{n\to\infty}}}

\newcommand\norml[2]{\|#1\|_{L^{#2}}}
\newcommand\normlp[1]{\|#1\|_{L^{p}}}

\newcommand\ie{i.e.\spacefactor=1000}
\newcommand\eg{e.g.\spacefactor=1000}

\newcommand{\as}{a.s.\spacefactor=1000}

\newcommand\whp{{w.h.p.\spacefactor=1000}}

\newcommand{\tend}{\longrightarrow}
\newcommand\dto{\overset{\mathrm{d}}{\tend}}
\newcommand\pto{\overset{\mathrm{p}}{\tend}}
\newcommand\asto{\overset{\mathrm{a.s.}}{\tend}}

\newcommand\op{o_{\mathrm p}}
\newcommand\Op{O_{\mathrm p}}
\newcommand\Olx[1]{O_{L^{#1}}}
\newcommand\olx[1]{o_{L^{#1}}}
\newcommand\Olp{\Olx p}
\newcommand\olp{\olx p}
\newcommand\Oloo{\Olx\infty}
\newcommand\oloo{\olx\infty}


\newcounter{CC}
\newcounter{cc}


\newcommand\E{\operatorname{\mathbb E{}}}
\renewcommand\P{\operatorname{\mathbb P{}}}
\newcommand\Var{\operatorname{Var}}

\newcommand\gd{\delta}

\newcommand\go{\omega}
\newcommand\gO{\Omega}

\newcommand\eps{\varepsilon}

\newcommand\cE{\mathcal E}

\def\[#1]{[\![#1]\!]}

\newcommand\qq{^{1/2}}

\newcommand\qqw{^{-1/2}}

\newcommand\qw{^{-1}}

\renewcommand{\=}{:=}

\newcommand\xnOanwhp{\ensuremath{X_n=O(a_n) \text{ \whp}}}
\newcommand\xnoanwhp{\ensuremath{X_n=o(a_n) \text{ \whp}}}


\newcommand{\Holder}{H\"older}

\newcommand\REM[1]{{\raggedright\texttt{[#1]}\par\marginal{XXX}}}



\hyphenation{Upp-sala}

\newcommand\urladdrx[1]{{\urladdr{\def~{{\tiny$\sim$}}#1}}}

\begin{document}
\title
{Probability asymptotics: notes on notation}

\date{24 April 2009; typos corrected 30 July 2009} 
\thanks{These notes were written at Institut Mittag-Leffler,
Djursholm, Sweden, during the programme ``Discrete Probability'' 2009.
I thank several participants for helpful comments and suggestions.}

\author{Svante Janson}
\address{Department of Mathematics, Uppsala University, PO Box 480,
SE-751~06 Uppsala, Sweden}
\email{svante.janson@math.uu.se}
\urladdrx{http://www.math.uu.se/~svante/}

\subjclass[2000]{} 

\begin{abstract} 
We define and compare several different versions of the $O$ and $o$
notations for random variables.
The main purpose is to give proper definitions
in order to avoid ambiguities and mistakes.
\end{abstract}

\maketitle

\section{Introduction}\label{S:intro}

There are many situations where one studies asymptotics of random
variables or events, and it is therefore important to have good
definitions and notations for random asymptotic
properties. Probabilists use often the standard concepts
\emph{convergence almost surely}
($\asto$),
\emph{convergence in probability} ($\pto$)
and \emph{convergence in distribution} ($\dto$);
see any textbook in probability theory for definitions. (Two of my
favorite references, at different levels, are \citet{Gut} and
\citet{Kallenberg}.) 

Other notations, often used in, for example, discrete probability such
as probabilistic combinatorics, are
probabilistic versions of the $O$ and 
$o$ notation. These notations are very useful; however, several
versions exist with somewhat different definitions (some equivalent
and some not), so some care is needed when using them.
In particular, I have for many years avoided the notations 
``$O(\cdot)$ \whp'' and ``$o(\cdot)$ \whp'' on the
grounds that these combine two different asymptotic notations in an
ambiguous and potentially dangerous way. (In which order do the
quantifiers really come in a formal definition?) I now have changed
opinion, and I regard these as valid and useful notations, provided
proper definitions are given. One of the purposes of these notes is to
state such definitions explicitly (according to my interpretations of
the notions; I hope that others interpret them in the same way).
Moreover, various relations and equivalences between different notions 
are given.

The results below are all elementary and more or less well-known. I do
not think that any results are new, and they are in any case
at the level of exercises in probability theory rather than advanced theorems.
Nevertheless, I hope that this collection of various definitions and
relations may be useful to myself and to others that use these 
concepts. (See also the similar discussion in \cite[Section 1.2]{JLR} 
of many of these, and some further, notions.)


We suppose throughout that $X_n$ are random variables and
$a_n$ positive numbers, $n=1,2,\dots$; unless we say otherwise, the
$X_n$ do not have to be defined on the same probability
space. (In other words, only their distributions matter.)
All unspecified limits are as \ntoo.

All properties below relating $X_n$ and $a_n$ depend only on
$X_n/a_n$; we could thus normalize and assume that
$a_n=1$, but for convenience in applications,
we will state the results in the more general form
with arbitrary positive $a_n$.

\section{$O$ and $o$}

We begin with the standard definitions for non-random sequences.
Assume that $b_n$ is some sequence of numbers.

  \begin{defq}
\item\label{DO}
  $b_n=O(a_n)$ if there exist constants $C$ and
  $n_0$ such that $|b_n|\le C a_n$ for $n\ge n_0$.
Equivalently, 
\begin{equation}\label{O}
b_n=O(a_n)\iff \limsup_\ntoo \frac{|b_n|}{a_n} <\infty.  
\end{equation}
\item\label{Do}
  $b_n=o(a_n)$ if $b_n/a_n\to0$.
Equivalently,
$b_n=o(a_n)$ if for every $\eps>0$
there exists $n_\eps$ such that $|b_n|\le \eps a_n$ for $n\ge n_\eps$.
  \end{defq}

\begin{remark}
When considering sequences as here,
  the qualifier ``$n\ge n_0$'' is not really necessary in the
  definition of $O(\cdot)$, and it is often omitted, which is
  equivalent to replacing $\limsup$ by $\sup$ in \eqref{O}. 
The only effect of using an $n_0$ is to allow us to have $a_n$ or $b_n$
  undefined or infinite, or $a_n=0$, for some small $n$;
  for example, we may write $O(\log n)$ without making an
  explicit exception for $n=1$. Indeed, if everything is well defined
  and $a_n>0$,
  as we assume in these notes,
and $|b_n|\le C a_n$ for $n\ge n_0$, then
$\sup_n|b_n/a_n| \le\max(C, \max_{i\le n_0}
  |b_n/a_n|)<\infty$.

On the other hand, when considering functions of a continuous
  variable, the two versions of $O(\cdot)$ are different and
  should be distinguished. (Both versions are used in the literature.)
For example, there is a difference between the conditions
$f(x)=O(x)$ on $(0,1)$ (meaning $\sup_{0<x<1}|f(x)/x|<\infty$, 
\ie, a uniform estimate on $(0,1)$), and  
$f(x)=O(x)$ as $x\to0$ (meaning $\limsup_{x\to0}|f(x)/x|<\infty$, 
\ie, an asymptotic estimate for small $x$); the former but
not the latter entails that $f$ is bounded also close to 1.
(As shown here, when necessary, the two versions of $O$ can be
  distinguished by adding qualifiers such as ``\ntoo'' or
  ``$x\to0$''  for the asymptotic version and ``$n\ge1$'' or
  ``$x\in(0,1)$'' for the uniform version. Often, however, such
  qualifiers are omitted when the meaning is clear from the context.)
\end{remark}

\section{Convergence in probability}

The standard definition of convergence in probability is as follows.
\begin{defq}
\item\label{Dpto}
  $X_n\pto 0$ if for every $\eps>0$,
  $\P(|X_n|>\eps)\to0$.
Equivalently,
$$X_n\pto0\iff\sup_{\eps>0}\limsup_{\ntoo}\P(|X_n|>\eps)=0.$$
\end{defq}

\begin{remark}
More generally, one defines $X_n\pto a$ for a constant
$a$ similarly, or by $X_n\pto a\iff X_n-a\pto0$.
If the random variables $X_n$ are defined on the same probability
space, one further defines $X_n\pto X$ for a random variable
$X$ (defined on that probability space) by
$X_n\pto X$ if $X_n-X\pto0$. 
\end{remark}

It is well-known that convergence in probability to a constant is
equivalent to convergence in distribution to the same constant.
(See \eg{} \cite{Billingsley, Gut, Kallenberg} for
definition and equivalent charaterizations of convergence in distribution.)
In particular,
\begin{equation}\label{ptodto}
  X_n\pto 0 \iff X_n\dto 0.
\end{equation}

\section{With high probability}

For events, we are in particular interested in typical events, \ie,
events that occur with probability tending to 1 as \ntoo. Thus, we
consider an event $\cE_n$ for each $n$, and 
say that:
\begin{defq}
  \item\label{Dwhp}
$\cE_n$ holds \emph{with high probability} (\whp) if
$\P(\cE_n)\to1$ as \ntoo. 
\end{defq}
This too is a common and useful notation. 

\begin{remark}\label{Ras}
A common name in probabilistic combinatorics for this property
has been
``almost surely'' or ``a.s.'', but that conflicts with the well
established use of this phrase (and abbreviation) in probability
theory where it means probability \emph{equal} to 1. In my opinion, 
``almost surely'' (\as)
should be reserved for its probabilistic meaning, since
giving it a different meaning might lead to confusion.
(In these notes, \as{} is used in the standard sense.)
Another alternative name for \ref{Dwhp} is ``asymptotically almost surely'' or
``a.a.s.''. This name is commonly used, for example in \cite{JLR}, 
and the choice between the synonymous ``\whp'' (often written whp) and
``a.a.s.'' is a 
matter of taste. (At present, I prefer \whp, so I use it here.)
\end{remark}

\refD{Dpto} of convergence in probability can be stated using
\whp{} as:
\begin{equation}\label{ptowhp}
  X_n\pto 0 \iff 
\text{ for every $\eps>0$, } |X_n|\le \eps \text{ \whp}
\end{equation}

\section{$\Op$ and $\op$}

A probabilistic version of $O$ that is frequently used is the following:
\begin{defq}
  \item\label{DOp}
$X_n=\Op(a_n)$ if for every $\eps>0$ there exists
constants $C_\eps$ and $n_\eps$ such that
$\P(|X_n|\le C_\eps a_n)>1-\eps$ for every $n\ge n_\eps$.
\end{defq}
In other words, $X_n/a_n$ is bounded, up to an exceptional event of
arbitrarily small (but fixed) positive probability.
This is also known as $X_n/a_n$ being \emph{bounded in probability}.

The definition \ref{DOp}
 can be rewritten in equivalent forms, for example as follows.
\begin{lemma}\label{LOp}
  The following are equivalent:
  \begin{romenumerate}
	\item \label{LOp1}
$X_n=\Op(a_n)$.
\item\label{LOp2}
For every $\eps>0$ there exists 
 $C_\eps$  such that
$\P(|X_n|\le C_\eps a_n)>1-\eps$ for every $n$.
\item \label{LOplimsup}
For every $\eps>0$ there exists $C_\eps$ such that
$
\limsup_\ntoo\P(|X_n|> C_\eps a_n)<\eps.
$
\item \label{LOpsup}
For every $\eps>0$ there exists $C_\eps$ such that
$
\sup_n\P(|X_n|> C_\eps a_n)<\eps.
$
\item \label{LOplimlimsup}
$
\lim_{C\to\infty}\limsup_\ntoo\P(|X_n|> C a_n)=0.
$
\item \label{LOplim+sup}
$
\lim_{C\to\infty}\sup_n\P(|X_n|> C a_n)=0.
$
  \end{romenumerate}
\end{lemma}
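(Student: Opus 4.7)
I would prove the six conditions equivalent by establishing the cycle
$(\ref{LOp2}) \Rightarrow (\ref{LOpsup}) \Rightarrow (\ref{LOplim+sup}) \Rightarrow (\ref{LOplimlimsup}) \Rightarrow (\ref{LOplimsup}) \Rightarrow (\ref{LOp1}) \Rightarrow (\ref{LOp2})$.
Most of the links are essentially rewordings, so the argument amounts to a bookkeeping exercise together with one genuine step that handles the difference between ``for $n \ge n_\eps$'' and ``for all $n$''.

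First I would note the trivial simplifications. The equivalence between bounds of the form $\P(|X_n| \le C a_n) > 1-\eps$ and $\P(|X_n| > C a_n) < \eps$ is just taking complements; this converts \ref{LOp2} into \ref{LOpsup} (replacing $\eps$ by $\eps/2$ if one insists on strict inequality under the supremum), and \ref{LOp1} into \ref{LOplimsup} after accepting a throwaway tail. The step \ref{LOpsup}$\Rightarrow$\ref{LOplim+sup} is immediate: since the event $\{|X_n|>Ca_n\}$ shrinks as $C$ increases, $\sup_n \P(|X_n|>Ca_n)$ is non-increasing in $C$, and \ref{LOpsup} says it drops below every $\eps>0$ eventually. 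The implications \ref{LOplim+sup}$\Rightarrow$\ref{LOplimlimsup}$\Rightarrow$\ref{LOplimsup} are trivial because $\limsup_n \le \sup_n$ and a limit being $0$ yields the existence of a $C$ making the quantity less than any prescribed $\eps$. For \ref{LOplimsup}$\Rightarrow$\ref{LOp1}, given $\eps>0$ choose $C_\eps$ with $\limsup_n \P(|X_n|>C_\eps a_n)<\eps$; by definition of $\limsup$ there is $n_\eps$ with $\P(|X_n|>C_\eps a_n)<\eps$, equivalently $\P(|X_n|\le C_\eps a_n) > 1-\eps$, for all $n\ge n_\eps$.

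The only step with any content is \ref{LOp1}$\Rightarrow$\ref{LOp2}, where I need to upgrade ``for $n\ge n_\eps$'' to ``for all $n$''. Fix $\eps>0$ and choose $C_\eps$ and $n_\eps$ as in \ref{LOp1}. For each of the finitely many indices $n<n_\eps$, the random variable $|X_n|/a_n$ is finite-valued (since $a_n>0$), hence $\P(|X_n|>Ca_n)\to 0$ as $C\to\infty$; thus for each such $n$ there exists $C_n$ with $\P(|X_n|\le C_n a_n) > 1-\eps$. Setting $C'_\eps \= \max\bigl(C_\eps, C_1, \dots, C_{n_\eps-1}\bigr)$ then gives $\P(|X_n|\le C'_\eps a_n) > 1-\eps$ for every $n$, which is \ref{LOp2}.

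The main obstacle, such as it is, lies in that last step, and it is a minor one: one must observe that $|X_n|/a_n$ is almost surely finite (using $a_n>0$) so that the tail probability $\P(|X_n|>Ca_n)$ can be made small purely by taking $C$ large, for each individual $n$. This is exactly the remark made earlier about eliminating the ``$n\ge n_0$'' qualifier in the deterministic definition~\ref{DO}, transplanted to the probabilistic setting.
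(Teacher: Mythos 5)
Your proposal is correct and takes essentially the same approach as the paper: the paper likewise isolates \ref{LOp1}$\implies$\ref{LOp2} as the one step with content (enlarging $C_\eps$ to cover the finitely many indices $n<n_\eps$, using that each $|X_n|/a_n$ is a.s.\ finite) and dismisses the remaining equivalences as easy rewordings, which you simply spell out and arrange into a single cycle. No gaps.
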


\begin{proof}
  \ref{LOp1}$\implies$\ref{LOp2} follows by
  increasing $C_\eps$ in \ref{DOp} such that
  $\P(|X_n|\le C_na_n)>1-\eps$ for
  $n=1,\dots,n_0$ too.

\ref{LOp2}$\implies$\ref{LOp1} is trivial.

\ref{LOp1}$\iff$\ref{LOplimsup}$\iff$\ref{LOplimlimsup}
  and
\ref{LOp2}$\iff$\ref{LOpsup}$\iff$\ref{LOplim+sup}
 are easy and left to the reader. 
\end{proof}

\begin{remark}\label{Rtight}
Another term equivalent to ``bounded in probability'' is
\emph{tight};
thus, $X_n=\Op(a_n)$ if and only if the family
\set{X_n/a_n} is tight.
By Prohorov's theorem \cite{Billingsley,Kallenberg},
tightness is equivalent to relative compactness of the set of
distributions. Hence,
$X_n=\Op(a_n)$ 
if and only if every subsequence of $X_n/a_n$ has a
subsequence that converges in distribution; however, different
convergent subsequences may have different limits.
In particular, if $X_n/a_n$ converges in distribution, then $X_n=\Op(a_n)$.
\end{remark}

The corresponding $\op$ notation can be defined as follows.

\begin{defq}
  \item\label{Dop}
$X_n=\op(a_n)$ if for every $\eps>0$ there exists
$n_\eps$ such that\\
$\P(|X_n|\le \eps a_n)>1-\eps$ for every $n\ge n_\eps$.
\end{defq}

The definition \ref{Dop} too has several equivalent forms, for example as follows.
\begin{lemma}\label{Lop}
  The following are equivalent:
  \begin{romenumerate}
	\item \label{Lop1}
$X_n=\op(a_n)$.
\item \label{Loplim}
For every $\eps>0$,
$
\P(|X_n|> \eps a_n)\to0.
$
\item \label{Lopsuplimsup}
$
\sup_{\eps>0}\limsup_\ntoo\P(|X_n|> \eps a_n)=0.
$
\item \label{Lopwhp}
For every $\eps>0$,
$|X_n|\le \eps a_n$ \whp
\item \label{Loppto}
$X_n/a_n\pto0$.
  \end{romenumerate}
\end{lemma}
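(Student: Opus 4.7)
The strategy is to funnel everything through \ref{Loplim} (statement (ii)), since it is the most concrete formulation and each of the other conditions is either a notational rephrasing of it or a symmetric tightening/loosening. First, the equivalence \ref{Loplim}$\iff$\ref{Lopwhp} is immediate from \refD{Dwhp}, since ``$|X_n|\le \eps a_n$ \whp'' is by definition $\P(|X_n|\le\eps a_n)\to 1$, i.e., $\P(|X_n|>\eps a_n)\to 0$. Similarly, \ref{Loplim}$\iff$\ref{Loppto} is just \refD{Dpto} applied to the sequence $X_n/a_n$, since the event $\set{|X_n|>\eps a_n}$ equals $\set{|X_n/a_n|>\eps}$ (here we use that $a_n>0$). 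Finally, \ref{Loplim}$\iff$\ref{Lopsuplimsup} holds because $\P(|X_n|>\eps a_n)\ge 0$, so its $\limsup$ is $0$ for every fixed $\eps>0$ iff the sup over $\eps$ is $0$; note also that $\eps\mapsto\P(|X_n|>\eps a_n)$ is monotone in $\eps$, so one can even restrict to rational $\eps$ if desired.

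The only genuine content is the equivalence \ref{Lop1}$\iff$\ref{Loplim}. The direction \ref{Loplim}$\implies$\ref{Lop1} is straightforward: given $\eps>0$, condition \ref{Loplim} with the same $\eps$ gives $\P(|X_n|>\eps a_n)<\eps$ for all $n\ge n_\eps$, which is exactly \refD{Dop}. The reverse direction is the one slightly subtle step: given $\eps>0$, one wants to show $\P(|X_n|>\eps a_n)\to 0$, and this requires pushing the probability arbitrarily close to $0$, not merely below $\eps$. The trick is to apply \refD{Dop} with a parameter $\eta=\min(\eps,\delta)$ for an arbitrary $\delta>0$. This yields an $n_\eta$ such that for $n\ge n_\eta$, $\P(|X_n|>\eta a_n)<\eta\le\delta$; since $\eta\le\eps$ we have the inclusion $\set{|X_n|>\eps a_n}\subseteq\set{|X_n|>\eta a_n}$, so $\P(|X_n|>\eps a_n)<\delta$. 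As $\delta>0$ was arbitrary, \ref{Loplim} follows.

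The main (and only) obstacle is the coupling of the two roles played by $\eps$ in \refD{Dop}: it simultaneously measures the size of $X_n/a_n$ and the probability of the exceptional event. In \ref{Loplim} the two are decoupled, so converting \ref{Lop1} to \ref{Loplim} requires the little $\min(\eps,\delta)$ manoeuvre above. All other implications are bookkeeping. After these verifications the chain of equivalences is complete, and the lemma follows.
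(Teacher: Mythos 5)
Your proof is correct and follows essentially the same route as the paper, which also reduces everything to condition (ii) via the definitions of \whp{} and $\pto$. The only difference is that you spell out the $\min(\eps,\delta)$ argument for \textup{(i)}$\iff$\textup{(ii)} (which the paper dismisses as ``standard arguments which we omit'') and explicitly handle \textup{(iii)}; both of these details are right.
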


\begin{proof}
  \ref{Lop1}$\iff$\ref{Loplim} follows by standard arguments which
  we omit.

\ref{Loplim}$\iff$\ref{Lopwhp} is immediate by the definition
\ref{Dwhp} of \whp

\ref{Loplim}$\iff$\ref{Loppto} is immediate by the definition
\ref{Dpto} of $\pto$. 

(Further, 
\ref{Lopwhp}$\iff$\ref{Loppto} follows by \eqref{ptowhp}.)
\end{proof}

\section{Using arbitrary functions $\go(n)$}

Some papers use properties that are stated using
an arbitrary function
(or sequence)
$\go(n)\to\infty$. 
(Or,  equivalently, stated in
terms of an arbitrary sequence $\gd_n\=1/\go(n)\to0$; see for
example \cite[Lemma 4.9]{Kallenberg}, which is essentially the same as
\ref{LOpgo1}$\iff$\ref{LOpgopto} in 
the following lemma.)
They are equivalent to $\Op$ or $\op$ by the
following lemmas.
(I find the $\Op$ and $\op$ notation more
transparent and prefer it to using $\go(n)$.)

\begin{lemma}\label{LOpgo}
  The following are equivalent:
  \begin{romenumerate}
	\item \label{LOpgo1}
$X_n=\Op(a_n)$.
\item\label{LOpgowhp}
For every function $\go(n)\to\infty$,
$|X_n|\le\go(n)a_n$ \whp
\item\label{LOpgopto}
For every function $\go(n)\to\infty$,
$|X_n|/(\go(n)a_n)\pto 0$.
  \end{romenumerate}
\end{lemma}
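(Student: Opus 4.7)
The plan is to prove the cyclic chain \ref{LOpgo1}$\Rightarrow$\ref{LOpgowhp}$\Rightarrow$\ref{LOpgopto}$\Rightarrow$\ref{LOpgo1}, or more naturally, establish \ref{LOpgowhp}$\iff$\ref{LOpgopto} directly and then handle the equivalence with \ref{LOpgo1} on both sides.

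First I would handle \ref{LOpgowhp}$\iff$\ref{LOpgopto} by a simple rescaling. If \ref{LOpgopto} holds and $\go(n)\to\infty$ is given, then taking $\eps=1$ in the definition of $|X_n|/(\go(n)a_n)\pto0$ gives $\P(|X_n|>\go(n)a_n)\to0$, which is \ref{LOpgowhp}. Conversely, assume \ref{LOpgowhp} and fix $\go(n)\to\infty$ together with $\eps>0$. The function $\go'(n)\=\eps\go(n)$ also tends to infinity, so applying \ref{LOpgowhp} to $\go'$ yields $\P(|X_n|>\eps\go(n)a_n)\to0$; since $\eps$ is arbitrary, this is \ref{LOpgopto} via \refL{Lop}\ref{Loplim}.

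Next, for \ref{LOpgo1}$\Rightarrow$\ref{LOpgowhp}, I would use \refL{LOp}\ref{LOplimsup}. Given $\go(n)\to\infty$ and $\eta>0$, pick $C_\eta$ with $\limsup_\ntoo\P(|X_n|>C_\eta a_n)<\eta$. Since $\go(n)\ge C_\eta$ for all large $n$, the event $\{|X_n|>\go(n)a_n\}$ is contained in $\{|X_n|>C_\eta a_n\}$ eventually, so $\limsup\P(|X_n|>\go(n)a_n)<\eta$. Letting $\eta\downto0$ gives $\P(|X_n|>\go(n)a_n)\to0$, i.e.,~\ref{LOpgowhp}.

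The main obstacle is the converse \ref{LOpgowhp}$\Rightarrow$\ref{LOpgo1}, which I would do by contrapositive through a diagonal-type construction. Assume $X_n\ne\Op(a_n)$. By the negation of \refL{LOp}\ref{LOplimlimsup}, there exists $\eps>0$ such that for every integer $k$, $\limsup_\ntoo\P(|X_n|>ka_n)\ge\eps$. Hence we can extract a strictly increasing sequence $n_1<n_2<\cdots$ with $\P(|X_{n_k}|>ka_{n_k})\ge\eps/2$. Define $\go\colon\bbN\to\bbN$ by $\go(n)=k$ for $n_k\le n<n_{k+1}$ (and $\go(n)=1$ for $n<n_1$); then $\go(n)\to\infty$, yet $\P(|X_{n_k}|>\go(n_k)a_{n_k})\ge\eps/2$ along the subsequence, so it is not the case that $|X_n|\le\go(n)a_n$ \whp. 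This contradicts \ref{LOpgowhp}, completing the proof.
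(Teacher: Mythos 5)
Your proof is correct and follows essentially the same route as the paper's: the implications \ref{LOpgo1}$\iff$\ref{LOpgowhp} via \refL{LOp} and a contrapositive/diagonal construction of a bad $\go(n)$, and \ref{LOpgowhp}$\iff$\ref{LOpgopto} by rescaling $\go$ by $\eps$ and taking $\eps=1$. The only differences are cosmetic (using the negation of \refL{LOp}\ref{LOplimlimsup} with an $\eps/2$ margin instead of the raw definition \ref{DOp}, and a step-function $\go$ instead of the paper's hybrid one), and they do not affect correctness.
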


\begin{proof}
\ref{LOpgo1}$\implies$\ref{LOpgowhp}.
For every $\eps>0$, choose $C_\eps$ as in \refL{LOp}\ref{LOplimsup}.
Then $\go(n)>C_\eps$ for large $n$, and thus
$$\limsup_\ntoo\P(|X_n|> \go(n) a_n)\le
\limsup_\ntoo\P(|X_n|> C_\eps a_n)<\eps.$$
Hence, $\limsup_\ntoo\P(|X_n|> \go(n) a_n)=0$, which is \ref{LOpgowhp}.

\ref{LOpgowhp}$\implies$\ref{LOpgo1}.
If $X_n=\Op(a_n)$ does not hold, then, by the definition
\ref{DOp},  there exists $\eps>0$ such that for every
$C$ there exist arbitrarily large $n$ with
$\P(|X_n|>Ca_n)\ge\eps$. We may thus inductively define
an increasing sequence $n_k$, $k=1,2,\dots$, such that
$\P(|X_{n_k}|>ka_{n_k})\ge\eps$. Define
$\go(n)$ by $\go(n_k)=k$ and $\go(n)=n$ for
$n\notin\set{n_k}$. Then $\go(n)\to\infty$ and
$\P(|X_{n}|>\go(n)a_{n})\not\to0$, so
\ref{LOpgowhp} does not hold.

\ref{LOpgowhp}$\implies$\ref{LOpgopto}.
If $\eps>0$, then $\eps\go(n)\to\infty$
too, and thus by \ref{LOpgowhp}
$|X_n|\le\eps\go(n)a_n$ \whp{}
Thus $X_n/(\go(n)a_n)\pto0$ by \eqref{ptowhp}.

\ref{LOpgopto}$\implies$\ref{LOpgowhp}.
Take $\eps=1$ in \eqref{ptowhp}.
\end{proof}

\begin{remark}\label{Rbgo}
 \refL{LOpgo} generalizes the corresponding result for a non-random sequence
  \set{b_n}:  
\set{b_n} is bounded $\iff$
  $|b_n|\le\go(n)$ for every $\go(n)\to\infty$
$\iff$
  $|b_n|/\go(n)\to0$ for every $\go(n)\to\infty$.
\end{remark}

\begin{lemma}\label{Lopgo}
  The following are equivalent:
  \begin{romenumerate}
	\item \label{Lopgo1}
$X_n=\op(a_n)$.
\item\label{Lopgowhp}
For some function $\go(n)\to\infty$,
$|X_n|\le a_n/\go(n)$ \whp
\item\label{Lopgopto}
For some function $\go(n)\to\infty$,
$\go(n)|X_n|/a_n\pto 0$.
  \end{romenumerate}
\end{lemma}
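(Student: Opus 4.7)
My plan is to prove Lemma~\ref{Lopgo} by showing the cycle \ref{Lopgo1}$\implies$\ref{Lopgowhp}$\implies$\ref{Lopgopto}$\implies$\ref{Lopgo1}. The equivalences \ref{Lopgowhp}$\iff$\ref{Lopgopto} are essentially formal manipulations via \eqref{ptowhp}, whereas \ref{Lopgopto}$\implies$\ref{Lopgo1} is a direct application of the definition \ref{Dpto}, since if $\go(n)\to\infty$ and $\go(n)|X_n|/a_n\pto 0$, then for any $\eps>0$ we have $\eps\go(n)\ge 1$ for large $n$, whence $\P(|X_n|>\eps a_n)\le\P(\go(n)|X_n|/a_n>1)\to 0$, giving $X_n/a_n\pto 0$, i.e.\ $X_n=\op(a_n)$ by \refL{Lop}.

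The main content is the implication \ref{Lopgo1}$\implies$\ref{Lopgowhp}, where we must \emph{construct} a suitable function $\go$. Starting from $X_n=\op(a_n)$, \refL{Lop}\ref{Loplim} gives, for each integer $k\ge 1$, some threshold $N_k$ with $\P(|X_n|>a_n/k)<1/k$ for all $n\ge N_k$. I would choose the $N_k$ strictly increasing and $N_1=1$, and then set $\go(n)\= k$ for $N_k\le n<N_{k+1}$. Then $\go(n)\to\infty$, and whenever $N_k\le n<N_{k+1}$ we have $\P(|X_n|>a_n/\go(n))=\P(|X_n|>a_n/k)<1/k$; letting $n\to\infty$ forces $k\to\infty$, so $\P(|X_n|>a_n/\go(n))\to 0$, which is exactly $|X_n|\le a_n/\go(n)$ \whp.

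Finally, for \ref{Lopgowhp}$\implies$\ref{Lopgopto}, given $\go$ as in \ref{Lopgowhp} I would replace it by $\go'(n)\=\sqrt{\go(n)}$, which still tends to infinity. On the event $\{|X_n|\le a_n/\go(n)\}$, which holds \whp, we have $\go'(n)|X_n|/a_n\le 1/\go'(n)\to 0$, so for every fixed $\eps>0$ the event $\{\go'(n)|X_n|/a_n>\eps\}$ has probability tending to $0$, giving $\go'(n)|X_n|/a_n\pto 0$. The implication \ref{Lopgopto}$\implies$\ref{Lopgowhp} is just taking $\eps=1$ in \eqref{ptowhp}, which yields $\go(n)|X_n|/a_n\le 1$ \whp, i.e.\ \ref{Lopgowhp}.

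The only non-routine step is the diagonal-style construction of $\go$ in \ref{Lopgo1}$\implies$\ref{Lopgowhp}; once that is in place, all remaining implications are immediate from \refL{Lop} and \eqref{ptowhp}. The philosophy parallels the proof of \refL{LOpgo}, except that here we construct a witness $\go$ rather than negating one, so no contradiction argument is needed.
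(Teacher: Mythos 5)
Your proof is correct and follows essentially the same route as the paper: the same diagonal construction of $\go$ from thresholds $N_k$ for \ref{Lopgo1}$\implies$\ref{Lopgowhp}, and the same square-root trick $\go'(n)=\go(n)^{1/2}$ for \ref{Lopgowhp}$\implies$\ref{Lopgopto}. The only (immaterial) difference is that you close the cycle with a direct \ref{Lopgo1} from \ref{Lopgopto}, whereas the paper routes that step through \ref{Lopgowhp}.
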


\begin{proof}
\ref{Lopgo1}$\implies$\ref{Lopgowhp}.
By the definition \ref{Dop}, for every $k$ there exists
$n_k$ such that if $n\ge n_k$, then
$\P(|X_n|>k\qw a_n)<k\qw$.
We may further assume that $n_k>n_{k-1}$, with $n_0=1$.
Define $\go(n)=k$ for $n_k\le n<n_{k+1}$.
Then $\go(n)\to\infty$ and 
$\P(|X_n|>\go(n)\qw a_n)<\go(n)\qw$. Since $\go(n)\qw\to 0$, this yields
\ref{Lopgowhp}.

\ref{Lopgowhp}$\implies$\ref{Lopgo1}.
Let $\eps>0$. Since $\go(n)\qw\le\eps$ for
large $n$, \ref{Lopgowhp} implies that
$|X_n|\le\eps a_n$ \whp{}
Thus
$X_n=\op(a_n)$ by \refL{Lop}.

\ref{Lopgowhp}$\implies$\ref{Lopgopto}.
If $\go(n)$ is as in \ref{LOpgowhp}, then 
$\go(n)\qq|X_n|/a_n\le\go(n)\qqw$ \whp{}; since
$\go(n)\qqw\to0$, this implies
$\go(n)\qq X_n/a_n\pto0$, so \ref{Lopgopto} holds with the function 
$\go(n)\qq\to\infty$.

\ref{Lopgopto}$\implies$\ref{Lopgowhp}.
Take $\eps=1$ in \eqref{ptowhp}.
\end{proof}

\section{$\Olp$ and $\olp$}\label{Solp}

The following notations are less common but sometimes very useful. Recall
that for $0<p<\infty$ the $L^p$ norm of a random
variable $X$ is $\normlp{X}\=\bigpar{\E|X|^p}^{1/p}$.
Let $p>0$ be a fixed number. (In applications, usually
$p=1$ or $p=2$.)

  \begin{defq}
\item\label{DOl}
  $X_n=\Olp(a_n)$ if $\normlp{X_n}=O(a_n)$.
\item\label{Dol}
  $X_n=\olp(a_n)$ if $\normlp{X_n}=o(a_n)$.
  \end{defq}
In other words, 
  $X_n=\Olp(a_n)\iff \E|X_n|^p=O(a_n^p)$ and 
  $X_n=\olp(a_n)\iff \E|X_n|^p=o(a_n^p)$;
in particular,
  $X_n=\Olx1(a_n)\iff \E|X_n|=O(a_n)$ and 
  $X_n=\olx1(a_n)\iff \E|X_n|=o(a_n)$.

$X_n=\olx1(a_n)$ thus says that
$\E|X_n/a_n|\to0$, which often is expressed as
\emph{$X_n/a_n\to0$ in mean}.
More generally,
$X_n=\olp(a_n)$ is the same as
$\E|X_n/a_n|^p\to0$, which is called
\emph{$X_n/a_n\to0$ in $p$-mean} (or in $L^p$).
(For $p=2$, a common name is
\emph{$X_n/a_n\to0$ in square mean}.)

  We may also take $p=\infty$. Since $L^\infty$ is
  the space of bounded random variables and
  $\norml{X}{\infty}$ is the essential
  supremum of $|X|$, \ie, 
$\norml{X}\infty\=\inf\set{C:|X|\le
  C\text{ a.s.}}$, 
the definitions \ref{DOl}--\ref{Dol} can for
  $p=\infty$ be rewritten as:
\begin{defq}
\item\label{DOloo}
$X_n=\Oloo(a_n)$ if there exists a constant
  $C$ such that $|X_n|\le Ca_n$ \as
\item\label{Doloo}
$X_n=\oloo(a_n)$ if there exists a 
sequence $\gd_n\to0$ such  
that $|X_n|\le \gd_na_n$ \as
\end{defq}

\begin{remark}\label{Rdiscrete}
  In applications in discrete probability, typically each $X_n$
  is a discrete random variable taking only a finite number of
  possible values, each with positive probability. In such cases
(and more generally if the number of values is countable, each with
  positive probability),
  $|X_n|\le C a_n$ \as{} $\iff|X_n|\le C a_n$ 
surely (\ie, for each realization), and 
  $|X_n|\le \gd_n a_n$ \as{} $\iff|X_n|\le \gd_n a_n$ surely.
\end{remark}

The notions $\Olp$ and $\olp$ are useful for example when considering sums of a
growing (or infinite) number of terms, since (for $p\ge1$) such
estimates can be added by Minkowski's inequality.
For example, if $X_n=\sum_{i=1}^n Y_{ni}$, and
$Y_{ni}=\Olp(a_n)$ (uniformly in $i$) for some
$p\ge1$, then $X_n=\Olp(na_n)$, and similarly for
$\olp$.
Note that the corresponding statement for $\Op$ and
$\op$ are false.
(Example: Let $Y_{ni}$ be independent with 
$\P(Y_{ni}=n^2)=1-\P(Y_{ni}=0)=1/n$ and let $a_n=1$.)

By Lyapunov's (or \Holder's) inequality, 
$X_n=\Olp(a_n)\implies X_n=\Olx q(a_n)$ and
$X_n=\olp(a_n)\implies X_n=\olx q(a_n)$ when
$0<q\le p\le\infty$. Thus the estimates become stronger as $p$ increases.
They are, for all $p$, stronger than $\Op$ and $\op$.

\begin{lemma}
  Let $0<p\le\infty$. Then
$X_n=\Olp(a_n)\implies  X_n=\Op(a_n)$ and 
$X_n=\olp(a_n)\implies  X_n=\op(a_n)$.
\end{lemma}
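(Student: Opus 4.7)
The plan is to split into cases according to whether $p$ is finite, and to use Markov's inequality for the finite case.

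First I would dispatch $p=\infty$ as essentially immediate from the definitions \ref{DOloo}--\ref{Doloo}: if $|X_n|\le Ca_n$ a.s., then $\P(|X_n|\le Ca_n)=1>1-\eps$ for every $\eps>0$ and every $n$, which verifies Lemma~\ref{LOp}\ref{LOp2} and hence $X_n=\Op(a_n)$; similarly, if $|X_n|\le \gd_n a_n$ a.s.\ with $\gd_n\to0$, then for every $\eps>0$ we have $\gd_n\le\eps$ for all large $n$, so $|X_n|\le\eps a_n$ a.s.\ eventually, which trivially gives $X_n=\op(a_n)$ via \refL{Lop}\ref{Loplim}.

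For $0<p<\infty$ the main tool is Markov's inequality in the form
\begin{equation*}
\P\bigpar{|X_n|>ta_n} \le \frac{\E|X_n|^p}{t^p a_n^p}
\end{equation*}
for every $t>0$. If $X_n=\Olp(a_n)$, then by \ref{DOl} there is a constant $M$ with $\E|X_n|^p\le Ma_n^p$ for all large $n$; given $\eps>0$, choosing $C_\eps\=(M/\eps)^{1/p}$ yields
\begin{equation*}
\P\bigpar{|X_n|>C_\eps a_n}\le \frac{M}{C_\eps^p}=\eps
\end{equation*}
for all large $n$, which by \refL{LOp}\ref{LOplimsup} gives $X_n=\Op(a_n)$.

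For the $o$-statement, if $X_n=\olp(a_n)$ then $\E|X_n|^p/a_n^p\to 0$, so Markov again gives
\begin{equation*}
\P\bigpar{|X_n|>\eps a_n}\le \eps^{-p}\,\frac{\E|X_n|^p}{a_n^p}\to 0
\end{equation*}
for every fixed $\eps>0$, which is exactly \refL{Lop}\ref{Loplim} and hence $X_n=\op(a_n)$. Nothing is really difficult here; the only small point worth noting is that the $\Olp$ hypothesis only guarantees $\E|X_n|^p\le Ma_n^p$ for $n\ge n_0$, but this poses no obstacle since $\Op$ is insensitive to finitely many initial terms (cf.\ the implication \ref{LOp1}$\implies$\ref{LOp2} of \refL{LOp}).
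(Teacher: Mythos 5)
Your proof is correct and follows exactly the route the paper intends: its entire proof is the one line ``Immediate from Markov's inequality,'' and your argument simply fills in the details of that same Markov-inequality computation (plus the trivial $p=\infty$ case). No substantive difference in approach.
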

\begin{proof}
  Immediate from Markov's inequality.
\end{proof}

The converse fails for every $p>0$. (Example for any $p>0$: Take $X_n$ with
$\P(X_{n}=e^n)=1-\P(X_{n}=0)=1/n$ and let $a_n=1$.)

\begin{remark}
For $p<\infty$,
  $X_n=\olp(a_n)$ is equivalent to $X_n=\op(a_n)$
  together with the condition that \set{|X_n/a_n|^p} are
  uniformly integrable, see \eg{} \cite{Gut} or \cite{Kallenberg}.
\end{remark}

Another advantage of $\Olp$ and $\olp$ is that they
are strong enough to imply moment estimates:
\begin{lemma}
  \label{Lolmoments}
If $k$ is a positive integer with $k\le p$, then
$X_n=\Olp(a_n)\implies \E X_n^k=O(a_n^k)$ and 
$X_n=\olp(a_n)\implies \E X_n^k=o(a_n^k)$.
\nopf
\end{lemma}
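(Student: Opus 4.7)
The plan is to deduce the claim from two completely elementary facts: the triangle inequality $|\E X_n^k| \le \E|X_n|^k$, and Lyapunov's inequality, which has already been invoked in the excerpt to give the monotonicity $\olp \subseteq \olx{q}$ and $\Olp \subseteq \Olx{q}$ for $q \le p$.

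First I would observe that, since $k$ is a positive integer,
\begin{equation*}
|\E X_n^k| \le \E |X_n|^k = \norml{X_n}{k}^k.
\end{equation*}
Next, since $k \le p$, Lyapunov's inequality (applied exactly as in the paragraph preceding the lemma) gives $\norml{X_n}{k} \le \norml{X_n}{p}$, so that $X_n = \Olp(a_n)$ implies $X_n = \Olx{k}(a_n)$, i.e., $\norml{X_n}{k} = O(a_n)$, and likewise with $o$ in place of $O$. Raising to the $k$th power (a continuous monotone operation on $[0,\infty)$ that preserves both $O$ and $o$ estimates when applied with the same exponent on both sides) yields $\norml{X_n}{k}^k = O(a_n^k)$ in the $\Olp$ case and $\norml{X_n}{k}^k = o(a_n^k)$ in the $\olp$ case. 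Combined with the first displayed inequality, this gives $|\E X_n^k| = O(a_n^k)$ or $o(a_n^k)$ respectively, which is the conclusion.

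There is essentially no obstacle here; the lemma is a one-line consequence of the monotonicity of $L^p$ norms already noted in the excerpt together with $|\E Y| \le \E|Y|$. The only point to be careful about is the hypothesis $k \le p$, without which Lyapunov's inequality points the wrong way; and the fact that $k$ is required to be a positive integer so that $X_n^k$ is unambiguously defined as a real random variable (and so that $|X_n^k| = |X_n|^k$).
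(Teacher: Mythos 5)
Your proof is correct, and it fills in exactly the argument the paper leaves implicit: the lemma is stated there without proof, immediately after the paragraph invoking Lyapunov's inequality for the monotonicity of the $\Olx{p}$ scale, so the intended justification is precisely your combination of $|\E X_n^k|\le\E|X_n|^k=\norml{X_n}{k}^k$ with $\norml{X_n}{k}\le\norml{X_n}{p}$. Your closing remarks on why $k\le p$ and $k$ a positive integer are needed are also apt, and the argument goes through for $p=\infty$ as well.
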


In particular,
$X_n=\Olx1(a_n)\implies \E X_n=O(a_n)$ and 
$X_n=\olx1(a_n)\implies \E X_n=o(a_n)$; further,
$X_n=\Olx2(a_n)\implies \Var X_n=O(a_n^2)$ and 
$X_n=\olx2(a_n)\implies \Var X_n=o(a_n^2)$.

\section{$O$ \whp{} and $o$ \whp}\label{Sowhp}

Since the basic meaning of $O$ is ``bounded by some fixed but
unknown constant'', my interpretation of ``$O(a_n)$ \whp'' is
the following:
\begin{defq}
\item\label{DOwhp}
\xnOanwhp{} if there exists a constant
  $C$ such that $|X_n|\le Ca_n$ \whp  
\end{defq}
Comparing Definitions \ref{DOp} and \ref{DOwhp}, we
see that the latter is a stronger notion:
\begin{equation}
  \xnOanwhp \implies X_n=\Op(a_n),
\end{equation}
but the converse does not hold. (In fact, \ref{DOwhp} is the
same as \ref{DOp} with the restriction that $C_\eps$
must be chosen independent of $\eps$.)
For example, if $X_n/a_n\dto Y$ for some random variable
$Y$, then always $X_n=\Op(a_n)$, see
\refR{Rtight}, but it is easily seen that
$X_n=O(a_n)$ \whp{} if and only if $Y$ is
bounded, \ie, $|Y|\le C$ (\as{}) for some
constant $C<\infty$. (In particular, if $X_n=X$ 
does not depend on $n$,
then always $X_n=\Op(1)$, but $X_n=O(1)$
\whp{} only if $X$ is bounded.)
This also shows that $X_n=\Olp(a_n)$ in general does not
imply $X_n=O(a_n)$ \whp

\begin{remark}
  \label{RtightC}
More generally, $X_n=O(a_n)$ \whp{} if and only if
every subsequence of $X_n/a_n$ 
has a subsequence that converges in distribution to
a bounded random variable, with some uniform bound for all subsequence limits.
\end{remark}

\begin{remark}
  The property $X_n=O(a_n)$ \whp{} was denoted
  $X_n=O_C(a_n)$ in \cite{JLR}. (A notation that perhaps
  was not very successful.)
\end{remark}

Similarly, the  basic meaning of $o$ is ``bounded by some fixed but
unknown sequence $\gd_n\to0$''; thus my interpretation of ``$o(a_n)$ \whp'' is
the following:
\begin{defq}
\item\label{Dowhp}
\xnoanwhp{} if there exists a sequence $\gd_n\to0$ such  
that $|X_n|\le \gd_na_n$ \whp  
\end{defq}

This condition is the same as
\refL{Lopgo}\ref{Lopgowhp} (with
$\gd_n=\go(n)\qw$), and thus \refL{Lopgo}
implies the following equivalence:
\begin{lemma}
  \label{Lowhp}
$\xnoanwhp \iff X_n=\op(a_n)$.
\nopf
\end{lemma}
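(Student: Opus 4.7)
The plan is to observe that Definition \ref{Dowhp} is essentially a restatement of condition \ref{Lopgowhp} from \refL{Lopgo}, so the lemma follows immediately from that earlier equivalence.

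More concretely, I would first note the bijective correspondence between positive sequences $\gd_n \to 0$ and functions $\go(n) \to \infty$ given by $\gd_n = 1/\go(n)$ (equivalently $\go(n) = 1/\gd_n$). Under this correspondence, the condition ``$|X_n| \le \gd_n a_n$ \whp{} for some $\gd_n \to 0$'' in \ref{Dowhp} is literally the same as ``$|X_n| \le a_n/\go(n)$ \whp{} for some $\go(n) \to \infty$'' in \refL{Lopgo}\ref{Lopgowhp}. (One small point to check: one should allow $\gd_n$ to be any nonnegative sequence tending to $0$, possibly with some zero entries, but these can be replaced by positive values without weakening the bound, so the correspondence goes through.)

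Then the chain \refL{Lopgo}\ref{Lopgowhp}$\iff$\refL{Lopgo}\ref{Lopgo1}, already established, gives the desired equivalence $X_n=o(a_n) \text{ \whp} \iff X_n = \op(a_n)$. I do not expect any obstacle here: the content of the lemma was already carried out in \refL{Lopgo}, and \refL{Lowhp} is really just a rebranding of one of its equivalent forms under the $\gd_n \leftrightarrow 1/\go(n)$ substitution. This is exactly why the text inserts the parenthetical ``(with $\gd_n=\go(n)\qw$)'' before stating the lemma.
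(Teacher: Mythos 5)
Your proposal is correct and matches the paper exactly: the paper derives \refL{Lowhp} by observing that \ref{Dowhp} coincides with \refL{Lopgo}\ref{Lopgowhp} under the substitution $\gd_n=\go(n)\qw$, which is precisely your argument. The extra remark about zero entries of $\gd_n$ is a harmless refinement the paper does not bother with.
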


It is obvious from the definitions \ref{DOwhp} and
\ref{Dowhp} that $o(a_n)$ \whp{} implies
$O(a_n)$ \whp, and we thus have the chain of implications
(where the last two are not reversible):
\begin{equation}\label{oooowhp}
  \op(a_n)
\iff
  o(a_n) \text{ \whp}
\implies
  O(a_n) \text{ \whp}
\implies
  \Op(a_n).
\end{equation}

\begin{warning}
I do not think that  definition \ref{DOwhp} is the only
interpretation of ``$O(a_n)$ \whp'' that is used, so extreme
care is needed when using or seeing this notation to avoid confusion
and mistakes. (For example, I've heard the interpretation that 
``$O(a_n)$ \whp'' should be equivalent to ``$\Op(a_n)$''.)
The risks with    ``$\op$ \whp'' seem smaller; at least, 
I do not know any other reasonable (non-equivalent) interpretation of
it.
\end{warning}

\section{$O$ and $o$ a.s.}\label{SOas}

In this section we assume that the random variables $X_n$ are
defined together on the same probability space $\gO$.
In other words, the variables $X_n$ are coupled.
(In combinatorial situations this is usually \emph{not} the
case, since typically each $X_n$ is defined separately on some
model of ``size'' $n$; however, it happens, for example in a
model that grows in size by some random process.)
This assumption makes it
possible to talk about convergence and other properties \as,
\ie, pointwise (= pathwise) for all points in the probability space $\gO$
except for a subset with probability 0. This means that we consider the
sequence $X_n(\go)$ of real numbers separately for each
point $\go$ in the probability space. Hence, we
apply definitions \ref{DO} and \ref{Do} for
non-random sequences and obtain the following definitions.

\begin{defq}
\item\label{DOas}
  $X_n=O(a_n)$ \as{} if for almost every
  $\go\in\gO$, there exists a number $C(\go)$
  such that $|X_n(\go)|\le C(\go) a_n$.
In other words, $X_n=O(a_n)$ \as{} if there exists a
  \emph{random variable} $C$ such that $|X_n|\le C a_n$ a.s.
Equivalently,
\begin{equation}
    X_n=O(a_n) \text{ \as} \iff 
\limsup_\ntoo \frac{|X_n|}{a_n}<\infty \text{ a.s.}
\end{equation}
\item\label{Doas}
  $X_n=o(a_n)$ \as{} if for almost every
  $\go\in\gO$, $|X_n(\go)|/a_n\to0$.
In other words,   $X_n=o(a_n)$ \as{} if $X_n/a_n\asto0$.
\end{defq}

It is well-known that 
convergence almost surely implies convergence in probability
(but not conversely).
Consequently, by \ref{Doas} and Lemmas \ref{Lop} and \ref{Lowhp},
\begin{equation}\label{oooas}
  X_n=o(a_n) \text{ a.s.} \implies X_n=\op(a_n)
\iff X_n=o(a_n) \text{ \whp}
\end{equation}

The situation for $O$ is more complicated.
We first observe the implication
\begin{equation}\label{OOas}
  X_n=O(a_n) \text{ a.s.} \implies X_n=\Op(a_n).
\end{equation}
(The converse does not hold, see \refE{E2} below.)
Indeed, 
if $c$ is any constant, and $C$ is a random variable with
$|X_n|\le Ca_n$ as in
\ref{DOas}, then $\P(|X_n|>ca_n)\le \P(C>c)$,
and thus \refL{LOp}\ref{LOplim+sup} holds because
$\P(C>c)\to0$ as $c\to\infty$. Hence,
\refL{LOp} yields \eqref{OOas}.

However, the following two examples show that neither of 
  $X_n=O(1)$ \as{} and  $X_n=O(1)$ \whp{} implies the other.

\begin{example}\label{E1}
  Let $X_n=X$ be independent of $n$ and let
  $a_n=1$. Then $X_n=O(1)$ \as{} for every random
  variable $X$ (take $C=X$ in \ref{DOas}), but
  $X_n=O(1)$ \whp{} only if $X$ is a bounded
  random variable (\ie, $|X|\le c$ \as{} for some
  constant $c$).
\end{example}

\begin{example}\label{E2}
  Let $X_n$ be independent random variables with
  $\P(X_n=n)=1/n$ and $\P(X_n=0)=1-1/n$, and take
  $a_n=1$. By the Borel--Cantelli lemma, 
$X_n=n$ infinitely often \as, and thus
  $\limsup_{\ntoo} X_n=\infty$ \as;
  consequently $X_n$ is not $O(1)$ a.s.
On the other hand, $X_n\pto0$, so $X_n=\op(1)$ and
  $X_n=O(1)$ \whp{} by \eqref{oooowhp}.
\end{example}

\begin{warning}
In particular, there is no analogue of \eqref{oooas} for
$O$ \as{} and $O$ w.h.p. 
Since ``\as'' usually is a strong notion compared to others (for
example for convergence), there is an obvious
risk of confusion and mistakes here, and it is important to be extra careful
when using ``$O(a_n)$ \as'' and ``$O(a_n)$ \whp''.  
\end{warning}

\section{A final warning}\label{Swarning}

Sometimes one sees expressions of the type $X_n=O(a_n)$ or
$X_n=o(a_n)$, for some random variables $X_n$,
without further qualifications or explanations. In analogy with \refS{Sowhp},
I think that the natural interpretations of these are the following:
\begin{defq}
\item\label{DOX}
$X_n=O(a_n)$ if there exists a constant
  $C$ such that $|X_n|\le Ca_n$ (surely, or  \as).
\item\label{DoX}
$X_n=o(a_n)$ if there exists a 
sequence $\gd_n\to0$ such  
that $|X_n|\le \gd_na_n$ (surely, or  \as).
\end{defq}
These notations are thus uniform estimates, and stronger than 
$X_n=O(a_n)$ \whp{} and \xnoanwhp, since no exceptional events of
small probabilities are allowed.

\begin{remark}\label{RSJW}
  As remarked in \refR{Rdiscrete}, in typical applications
  ``surely'' and ``a.s.'' are equivalent. When they are not, it is
  presumably best to follow standard probability theory practise and
  ignore events of probability 0, so the interpretation ``a.s'' in
  \ref{DOX}--\ref{DoX} seems best.
In this case, \ref{DOX}--\ref{DoX} are the same as
  \ref{DOloo}--\ref{Doloo}, so $X_n=O(a_n)\iff
  X_n=\Oloo(a_n)$ and $X_n=o(a_n)\iff X_n=\oloo(a_n)$.
\end{remark}

\begin{warning}
However, I guess that most times one of these notations is used,
\ref{DOX} or \ref{DoX} is
\emph{not} the intended meaning; either there are typos, or
the author really means something else, presumably one of the other
notions discussed above.   
\end{warning}

\begin{remark}
In the special situation that all $X_n$ are defined on a common
probability space as in \refS{SOas}, another reasonable
interpretation of $X_n=O(a_n)$ and $X_n=o(a_n)$ is
$X_n=O(a_n)$ \as{} and  $X_n=o(a_n)$
\as{}, see \ref{DOas}--\ref{Doas}. This is
equivalent to allowing random $C$ or $\gd_n$ in
\ref{DOX}--\ref{DoX}, and is a weaker property.
(This emphasizes the need for careful definitions to avoid ambiguities.)
\end{remark}

The notations \ref{DOX} and \ref{DoX} thus risk being ambiguous.
  If \ref{DOX} or \ref{DoX} really is intended, it
  may be better to use the unambiguous notation
  $\Olx\infty$ or $\olx\infty$, see \refS{Solp} and \refR{RSJW}.


\newcommand\AAP{\emph{Adv. Appl. Probab.} }
\newcommand\JAP{\emph{J. Appl. Probab.} }
\newcommand\JAMS{\emph{J. \AMS} }
\newcommand\MAMS{\emph{Memoirs \AMS} }
\newcommand\PAMS{\emph{Proc. \AMS} }
\newcommand\TAMS{\emph{Trans. \AMS} }
\newcommand\AnnMS{\emph{Ann. Math. Statist.} }
\newcommand\AnnPr{\emph{Ann. Probab.} }
\newcommand\CPC{\emph{Combin. Probab. Comput.} }
\newcommand\JMAA{\emph{J. Math. Anal. Appl.} }
\newcommand\RSA{\emph{Random Struct. Alg.} }
\newcommand\ZW{\emph{Z. Wahrsch. Verw. Gebiete} }
\newcommand\DMTCS{\jour{Discr. Math. Theor. Comput. Sci.} }

\newcommand\AMS{Amer. Math. Soc.}
\newcommand\Springer{Springer-Verlag}
\newcommand\Wiley{Wiley}

\newcommand\vol{\textbf}
\newcommand\jour{\emph}
\newcommand\book{\emph}
\newcommand\inbook{\emph}
\def\no#1#2,{\unskip#2, no. #1,} 
\newcommand\toappear{\unskip, to appear}

\newcommand\webcite[1]{
\texttt{\def~{{\tiny$\sim$}}#1}\hfill\hfill}
\newcommand\webcitesvante{\webcite{http://www.math.uu.se/~svante/papers/}}
\newcommand\arxiv[1]{\webcite{arXiv:#1.}}

\def\nobibitem#1\par{}


\begin{thebibliography}{99}

\bibitem{Billingsley}
P. Billingsley,
\emph{Convergence of Probability Measures},
\Wiley, New York, 1968; 2nd ed. 1999.

\bibitem[Gut(2005)]{Gut}
A. Gut,
\book{Probability: A Graduate Course}.
Springer, New York, 2005.
Corrected 2nd printing 2007.


\bibitem[Janson, {\L}uczak and Ruci\'nski]{JLR}
S. Janson, T. \L uczak \& A. Ruci\'nski,
\book{Random Graphs}.
\Wiley, New York, 2000.


\bibitem[Kallenberg(2002)]{Kallenberg}
O. Kallenberg,
\book{Foundations of Modern Probability.}
2nd ed., Springer, New York, 2002. 



\end{thebibliography}
\end{document}